\newcommand{\R}{\mathbb{R}}
\newcommand{\N}{\mathbb{N}}
\newcommand{\Z}{\mathbb{Z}}
\newcommand{\half}{\tfrac{1}{2}}
\newtheorem{theorem}{Theorem}[section]
\newtheorem{proposition}[theorem]{Proposition}
\newtheorem{lemma}[theorem]{Lemma}
\newtheorem{conjecture}[theorem]{Conjecture}
\begin{document}
\title{Asymptotics of a discrete-time particle system near a reflecting boundary}
\author{Jeffrey Kuan}
\maketitle
\begin{abstract} We examine a discrete-time Markovian particle system on $\N\times\Z_+$ introduced in \cite{kn:D}. The boundary $\{0\}\times\Z_+$ acts as a reflecting wall. The particle system lies in the Anisotropic Kardar-Parisi-Zhang with a wall universality class. After projecting to a single horizontal level, we take the long--time asymptotics and obtain the discrete Jacobi and symmetric Pearcey kernels. This is achieved by showing that the particle system is identical to a Markov chain arising from representations of $O(\infty)$ (introduced in \cite{kn:BK}). The fixed--time marginals of this Markov chain are known to be determinantal point processes, allowing us to take the limit of the correlation kernel.

We also give a simple example which shows that in the multi-level case, the particle system and the Markov chain evolve differently.
\end{abstract}

\section{Introduction}
In the study of random interface growth, universality is a ubiquitous topic. Informally, universality says that random growth models with similar physical properties will have identical behavior at long--time asymptotics. In particular, different models in the same universality class are expected to have the same growth exponents and limiting distributions. In this sense, the classical central limit theorem is a universality statement, where the growth exponent is $1/2$ and the limiting distribution is Gaussian, regardless of the distribution of each summand.

A different universality class, called the Kardar-Parisi-Zhang (KPZ) universality class (introduced in \cite{KPZ}), models a variety of real--world growth processes, such as turbulent liquid crystals \cite{TS} and bacteria colony growth \cite{WIMM}. If $h(\vec{x},t)$ is the height of the interface at location $\vec{x}$ and time $t$, then it satisfies the stochastic differential equation
$$
\frac{\partial h}{\partial t}=\nu \nabla^2 h + \frac{\lambda}{2} (\nabla h)^2 + \eta(\vec{x},t),
$$
where $\eta(\vec{x},t)$ is space--time white noise. Due to the non--linearity, however, this stochastic differential equation is not well--defined. A common mathematical approach has been to study exactly solvable models (i.e. where the finite--time probability distributions can be computed exactly)  in the universality class, and then to take the long--time limits. Examples of such models include random matrix theory \cite{TW2}, the PNG droplet \cite{PS}, ASEP \cite{TW}, non--intersecting Brownian motions \cite{ABK}, and random partitions \cite{kn:BK0}. In all of these models, the growth exponent is $1/3$ and the limiting distribution is called the Airy process, demonstrating the universality of the KPZ equation. More recently, there have also been mathematically rigorous interpretations of a solution to the KPZ equation \cite{ACQ,H}.

The universality class considered in this paper is called anisotropic Kardar-Parisi-Zhang (AKPZ) with a wall. It is a variant of KPZ in two ways: there is anisotropy and the substrate acts as a reflecting barrier. As before, the stochastic differential equation is not well--defined, so we take the approach of analyzing exactly solvable models. So far, there have only been two models which have been proven to be in this universality class: a randomly growing stepped surface in $2+1$ dimensions \cite{kn:BK} and non--intersecting squared Bessel paths \cite{KMW}. In both cases, the limiting behavior near the critical point of the barrier has growth exponent $1/4$ and limiting process the Symmetric Pearcey process (defined in section \ref{SPK}).




The exactly solvable model considered here was introduced in \cite{kn:D}. It is a discrete-time interacting particle system with a wall which evolves according to geometric jumps with a parameter $q\in [0,1)$. In the $q\rightarrow 1$ limit, this model also has connections to a random matrix model. The main result of this paper is that in the long--time asymptotics near the wall, the symmetric Pearcey process appears after rescaling by $N^{1/4}$. This therefore helps to establish the universality of the growth exponent $1/4$ and the Symmetric Pearcey process in the AKPZ with a wall universality class. The approach is to show that when projected to a single level and to (finite) integer times, the particle system is identical to a previously studied family of determinantal point process. Taking asymptotics of the correlation kernel then yields the desired results. 

We will also show that away from the critical point and at finite distances from the wall, the discrete Jacobi kernel appears in the long--time asymptotics. This kernel also appeared in the long--time limit in \cite{kn:BK}, but has not appeared anywhere else. In particular, it did not appear in non--intersecting squared Bessel paths \cite{KMW0}.


In section \ref{2}, we review the particle system from \cite{kn:D} and the determinantal point processes from \cite{kn:BK}. In section \ref{3}, we compute the correlation kernel for the particle system on one level by showing that the two processes are identical. In section \ref{4}, we take the large-time asymptotics.

The models in \cite{kn:D} and \cite{kn:BK} have connections to the representation theory of the orthogonal groups, but this paper is intended to be understandable without knowledge of representation theory.

It should also be true that given the initial conditions, the fixed-time distributions for the two models are identical without needing to restrict to a single level, but this is not pursued here.

\textbf{Acknowledgements}. The author would like to thank Alexei Borodin, Manon Defosseux, Ivan Corwin and the referees for helpful comments. 

\section{Two Models}\label{2}
\subsection{Interacting Particle System}\label{IPS}
The interacting particle system in \cite{kn:D} arises from a Pieri-type formula for the (finite-dimensional) orthogonal groups. Here, we briefly describe the model. 
 
The particles live on the lattice\footnote{$\N$ denotes the non--negative integers and $\Z_+$ denotes the positive integers.} $\N\times\Z_+$. The horizontal line $\N\times\{k\}$ is often called the $k$th level. There are always $\lfloor \tfrac{k+1}{2} \rfloor$ particles on the $k$th level, whose positions at time $n$ will be denoted $X^k_1(n)\geq X^k_2(n)\geq X^k_3(n)\geq \ldots\geq X^k_{\lfloor (k+1)/2 \rfloor}(n)\geq 0$. The time can take integer or half--integer values. For convenience of notation, $X^k(n)$ will denote $(X^k_1(n), X^k_2(n), X^k_3(n), \ldots, X^k_{\lfloor (k+1)/2 \rfloor}(n))$ $\in\mathbb{N}^{\lfloor (k+1)/2 \rfloor}$. More than one particle may occupy a lattice point. The particles must satisfy the \textit{interlacing property}
\begin{equation}\label{Interlacing}
X^{k+1}_{i+1}(n)  \leq X^k_i(n) \leq X^{k+1}_i(n)
\end{equation}
for all meaningful values of $k$ and $i$. This will be denoted $X^k\prec X^{k+1}$. With this notation, the state space can be described as the set of all sequences $(X^1\prec X^2 \prec \ldots)$ where each $X^k\in \mathbb{N}^{\lfloor (k+1)/2 \rfloor}$. The initial condition is $X_i^k(0)=0$, called the \textit{densely packed} initial conditions. Now let us describe the dynamics. 

For $n\geq 0,k\geq 1$ and $1\leq i \leq \lfloor \tfrac{k+1}{2} \rfloor$, define random variables
\[
\xi^k_i(n+1/2), \ \ \xi^k_i(n)
\]
which are independent identically distributed geometric random variables with parameter $q$. In other words, $\mathbb{P}(\xi^1_1(1/2)=x)=q^x(1-q)$ for $x\in\N$. Let $R(x,y)$ be a Markov kernel on $\N$ defined by
\[
R(x,y)= \frac{1-q}{1+q}\cdot\frac{q^{\vert x-y\vert}+q^{x+y}}{1+1_{y=0}},
\]
so that $R(x,\cdot)$ is the law of the random variable $\vert x+\xi_1^1(1)-\xi_1^1(\tfrac{1}{2})\vert$.

At time $n$, all the particles except $X^k_{({k+1})/{2}}(n)$ try to jump to the left one after another in such a way that the interlacing property is preserved. The particles $X^k_{({k+1})/{2}}(n)$ do not jump on their own. The precise definition is 
\begin{eqnarray*}
X^k_{(k+1)/{2}}(n+\tfrac{1}{2})&=&\min(X^k_{(k+1)/{2}}(n), X_{(k-1)/2}^{k-1}(n+\tfrac{1}{2}))\ \ k \text{ odd}\\
X^k_i(n+\tfrac{1}{2})&=& \max(X^{k-1}_i(n), \min(X^k_i(n),X^{k-1}_{i-1}(n+\tfrac{1}{2})) - \xi^k_i(n+\tfrac{1}{2})),
\end{eqnarray*}
where $X^{k-1}_0(n+\tfrac{1}{2})$ is formally set to $+\infty$.

At time $n+\tfrac{1}{2}$, all the particles except $X^k_{({k+1})/{2}}(n+\half)$ try to jump to the right one after another in such a way that the interlacing property is preserved. The particles $X^k_{({k+1})/{2}}(n+\half)$ jump according to the law $R$. The precise definition is 
\[
X^k_{(k+1)/{2}}(n+1)=\min(\vert X^k_{(k+1)/{2}}(n) + \xi^k_{(k+1)/2}(n+1) - \xi^k_{(k+1)/2}(n+\half) \vert, X^{k-1}_{(k-1)/{2}}(n) )
\]
when $k$ is odd and
\[
X^k_i(n+1)= \min(X^{k-1}_{i-1}(n+\half), \max(X^k_i(n+\half),X^{k-1}_{i}(n+1)) + \xi^k_i(n+1)),
\]
where $X^{k-1}_0(n+1)$ is formally set to $+\infty$.

Let us explain the particle system. The particles preserve the interlacing property in two ways: by pushing particles above it, and being blocked by particles below it. So, for example, in the left jumps, the expression $\min(X^k_i(n),X^{k-1}_{i-1}(n+\tfrac{1}{2}))$ represents the location of the particle after it has been pushed by a particle below and to the right. Then the particle attempts to jump to the left, so the term $\xi^k_i(n+\tfrac{1}{2})$ is subtracted. However, the particle may be blocked a particle below and to the left, so we must take the maximum with $X^{k-1}_i(n)$.

While $X_i^k(n)$ is not simple, applying the shift $\tilde{X}_i^k(n)=X_i^k(n) + \lfloor\tfrac{k+1}{2}\rfloor -i$ yields a simple process. In other words, $\tilde{X}$ can only have one particle at each location.

Figure \ref{Jumping} shows an example of $\tilde{X}$. Additionally, an interactive animation can be found at \url{http://www.math.harvard.edu/~jkuan/DiscreteTimeWithAWall.html}

By drawing lozenges around the particles as in Figure \ref{3D}, one can see that the particle system can be interpreted as a two--dimensional stepped surface. This can be made rigorous by defining the height function at a point to be the number of particles to the right of that point. With this interpretation, the jumping of the particles corresponds to adding and removing sticks, and therefore the interacting particle system is equivalent to a randomly growing surface. The anisotropy is shown with the observation that only sticks of one type may be added or removed. The necessity of the interlacing condition is also visually apparent: it guarantees that the lozenges can be drawn in a way to make the figure three--dimensional.

\begin{center}
\begin{figure}
\caption{The figure on the left shows lozenges corresponding to the top figure in Figure \ref{Jumping}. The top right figure shows sticks that are never added or removed with each jump, while the bottom right figure shows sticks that are added or removed.}
\label{3D}
\includegraphics[height=2in]{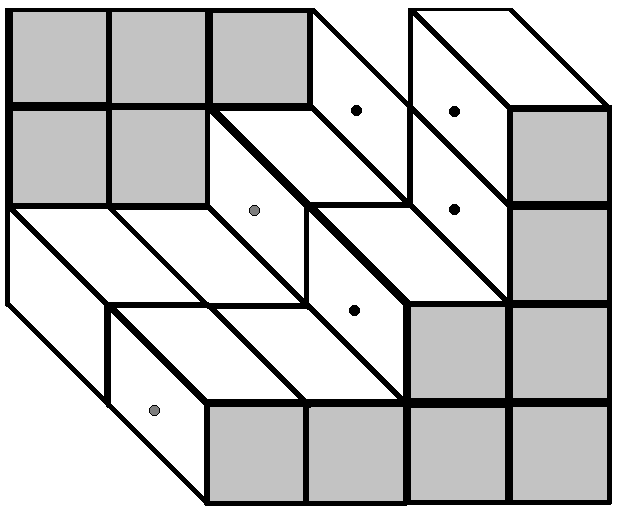}
\quad \quad \quad \quad
\includegraphics[height=2in]{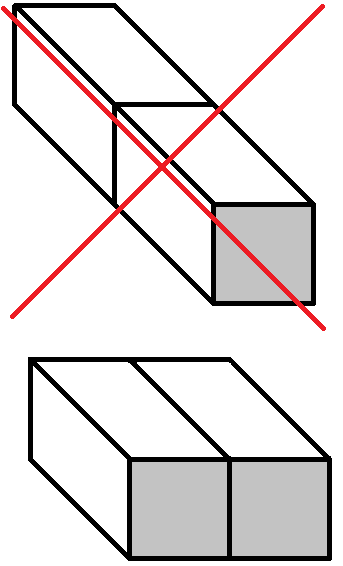}
\end{figure}
\end{center}

\subsection{Determinantal Point Processes}\label{MC}
In \cite{kn:BK}, the authors introduce a family of determinantal point processes, indexed by a time parameter $n\in\mathbb{N}$, which arise from representations of the infinite-dimensional orthogonal group. (See \cite{kn:B} for background on determinantal point processes.) This family depends on a function $\phi\in C^1[-1,1]$. Each determinantal point process also lives on the lattice $\N\times\Z_+$, with exactly $\lfloor \tfrac{k+1}{2}\rfloor$ particles on the $k$th level, and the particles must also satisfy the interlacing property. 

\textbf{Remark on notation.} It is convenient to re-label the levels. For $a=\pm 1/2$, one should think of $(r,a)$ as corresponding to the $2r+a+\half$ level. Throughout this paper, the letter $k$ will denote the level and the letter $r$ will denote the number of particles. Set $\mathbb{J}_r$ to be the set of nonincreasing sequences of integers $(\lambda_1\geq\ldots\geq\lambda_r\geq 0)$. The superscript $\lambda^{(k)}$ will mean that $\lambda^{(k)}$ lives on the $k$th level.  To save space, bold greek letters such as $\boldsymbol{\lambda}$ will denote 
$\boldsymbol{\lambda} = (\lambda^{(1)}\prec\lambda^{(2)}\prec\ldots\prec\lambda^{(k)})$,
and similarly for $\boldsymbol{X}$, and $\mathbf{0}$ will denote the densely packed initial conditions. 

Let $\boldsymbol{Y}(n)$ denote the positions of the particles in this determinantal point process at time $n$. Proposition 3.11 of \cite{kn:BK} establishes that there is a Markov chain\footnote{Strictly speaking, Proposition 3.11 proves \eqref{MP} without showing that $T^{\phi}$ has non--negative entries. A better term would be ``signed Markov chain,'' but this is not standard terminology. In any case, Proposition \ref{Prop} below will show that for the $\phi$ studied in this paper, $T^{\phi}$ is a bona--fide Markov chain.}  $T^{\phi}$  connecting $\boldsymbol{Y}(n)$, in the sense that 
\begin{equation}\label{MP}
\mathbb{P}(\boldsymbol{Y}(n+1)=\boldsymbol{\mu}) = \sum_{\boldsymbol{\lambda}} \mathbb{P}(\boldsymbol{Y}(n)=\boldsymbol{\lambda}) T^{\phi}(\boldsymbol{\lambda,\mu}). 
\end{equation}
Now let us give the formula for $T^{\phi}$. 

Let $\mathsf{J}_s^{(a,b)}(x)$ denote the (normalized) $s$-th Jacobi polynomial with parameters $a,b$. These are polynomials of degree $s$ which are orthogonal with respect to the measure $(1-x)^a(1+x)^bdx$ on $[-1,1]$. In this paper, we just need the equations 
\begin{align*}
\mathsf{J}_s^{(1/2,-1/2)}\left(\frac{z+z^{-1}}{2}\right) =& \frac{z^{s+1/2}-z^{-s-1/2}}{z^{1/2}-z^{-1/2}},\\
\mathsf{J}_s^{(-1/2,-1/2)}\left(\frac{z+z^{-1}}{2}\right) =& \frac{z^s+z^{-s}}{2}.
\end{align*}
Also define
\[
W^{(a,b)}(s)=
\begin{cases}
2,\ \ \text{if}\ \ s>0,a=b=-\frac{1}{2}\\
1,\ \ \text{if}\ \ s=0,a=b=-\frac{1}{2}\\
1,\ \ \text{if}\ \ s\geq 0,a=\frac{1}{2},b=-\frac{1}{2}
\end{cases}
\]

For a function $\phi\in C^1[-1,1]$, define 
\[
I_a^{\phi}(l,s)=\frac{W^{(a,-1/2)}(s)}{\pi}\int_{-1}^1 \mathsf{J}_s^{(a,-1/2)}(x) \mathsf{J}_l^{(a,-1/2)}(x) \phi(x) (1-x)^a(1+x)^{-1/2}dx.
\]
For $a=\pm\half$, define the matrix $T_{r,a}^{\phi}$ with nonnegative entries, and rows and columns paramterized by $\mathbb{J}$:
\[
T_{r,a}^{\phi}(\mu,\lambda)=\det[I_a^{\phi}(\mu_i-i+r,\lambda_j-j+r)]_{1\leq i,j\leq r}\frac{\dim_{2r+1/2+a}\lambda}{\dim_{2r+1/2+a}\mu}.
\]
Here $\dim$ is the dimension of the corresponding representation of $SO(2r+1/2+a)$ -- but for the purposes of this paper, it suffices just to know that $\dim$ is a positive integer. In the proof of Proposition \ref{Prop}, the $\dim$ terms will cancel immediately anyway. Set
\[
T_k^{\phi}=\begin{cases} T^{\phi}_{\lfloor(k+1)/2\rfloor,1/2}, \ \ k\ \ \textit{even} \\ T^{\phi}_{\lfloor (k+1)/2\rfloor,-1/2}, \ \ k\ \ \textit{odd} \end{cases}
\]
For $\lambda$ on the $k$th level and $\mu$ on the $k-1$ level, let $\varkappa_{k-1}^k$ be 
\[
\varkappa^k_{k-1}(\lambda,\mu)=
\begin{cases}
0,\ \ \mu\not\prec\lambda \\
1, \ \ \mu\prec\lambda\ \ \textit{and k odd} \\
1,\  \ \mu\prec\lambda,\ \mu_{r/2}=0, \ \ \textit{and k even}\\
2, \ \ \mu\prec\lambda,\ \mu_{r/2}>0,\ \ \textit{and k even}
\end{cases}
\]
and $T^k_{k-1}$  be
\[
T^k_{k-1}(\lambda,\mu)=\frac{\dim_{k}\mu}{\dim_{k+1}\lambda}\varkappa^k_{k-1}(\lambda,\mu)
\]
and
\[
\Delta^k_{k-1}(\lambda,\mu)=\sum_{\nu} T_k(\lambda,\nu)T^k_{k-1}(\nu,\mu)
\]

The matrix of transition probabilities is 
\[
T^{\phi}(\boldsymbol{\mu},\boldsymbol{\lambda})
=T_1^{\phi}(\mu^{(1)},\lambda^{(1)})\prod_{j=2}^k \frac{T^{\phi}_j(\mu^{(j)},\lambda^{(j)})T^j_{j-1}(\lambda^{(j)},\lambda^{(j-1)})}{\Delta_{j-1}^j(\lambda^{(j)},\lambda^{(j-1)})}.
\]
For the densely packed initial conditions, $T^{\phi}$ satisfies a semingroup property. More specifically, if $T^{\phi_1}T^{\phi_2}$ denotes matrix multiplication, then (\cite{kn:BK})
\[
T^{\phi_1\phi_2}(\mathbf{0},\boldsymbol{\mu})=[T^{\phi_1}T^{\phi_2}](\mathbf{0},\boldsymbol{\mu}).
\] 
When projected to the $k$th level, the matrix of transition probabilities is just $T_k^{\phi}$.

Certain functions $\phi$ arise naturally from the representations of $O(\infty)$ -- see section 2.1 of \cite{kn:BK}. For our purposes, it suffices to consider the function:
$$
\phi_{\alpha}(x)=(1+\alpha(1-x)+\alpha^2(1-x)/2)^{-1}, \ \ \alpha\geq 0.
$$

By Proposition 4.1 from \cite{kn:BK}, $\boldsymbol{Y}(n)$ is determinantal with correlation kernel $K(r_1,a_1,s_1;r_2,a_2,s_2)$ equal to 
\begin{multline}\label{Kernel}
1_{2r_1+a_1\geq 2r_2+a_2}\frac{W^{(a_1,-1/2)}(s_1)}{\pi}\int_{-1}^1\mathsf{J}_{s_1}^{(a_1,-1/2)}(x)\mathsf{J}_{s_2}^{(a_2,-1/2)}(x)(x-1)^{r_1-r_2}(1-x)^{a_1}(1+x)^{-1/2}dx\\
+\frac{W^{(a,-1/2)}(s_1)}{\pi}\frac{1}{2\pi i}\int_{-1}^1\oint_C \frac{\phi_{\alpha}(x)^n}{\phi_{\alpha}(u)^n} \mathsf{J}_{s_1}^{(a_1,-1/2)}(x) \mathsf{J}_{s_2}^{(a_2,-1/2)}(u)\\
\times \frac{(x-1)^{r_1}}{(u-1)^{r_2}}\frac{(1-x)^{a_1}(1+x)^{-1/2}dudx}{x-u}.
\end{multline}

\subsection{Finite--time distributions}
The next theorem, which will be proved in the next section, establishes that $\tilde{X}^k$ is a determinantal point process. 

\begin{theorem}\label{MainTheorem}  Let $\alpha=2q/(1-q)$. Then $\tilde{X}^k(n)=Y^k(n).$ In particular, $\tilde{X}^k(n)$ is a determinantal point process on $\N$ with  kernel $K(r,a,s_1;r,a,s_2)$, where $2r+1/2+a=k$.

\end{theorem}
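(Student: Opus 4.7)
Both processes begin at the deterministic densely packed configuration $\mathbf{0}$, and \cite{kn:BK} supplies the semigroup identity $T^{\phi_1\phi_2}(\mathbf{0},\cdot)=[T^{\phi_1}T^{\phi_2}](\mathbf{0},\cdot)$. So the law of $Y^k(n)$ is $T_k^{(\phi_\al)^n}(\mathbf{0},\cdot)$, and the task reduces to matching the law of $\tilde{X}^k(n)$ with this. I would induct on $n$ after identifying the one-step transition for the level-$k$ projection of $\tilde{X}$ with $T_k^{\phi_\al}$.

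The starting point is the substitution $x=(z+z^{-1})/2$. With $\al=2q/(1-q)$ one computes
\[
\phi_\al\!\left(\tfrac{z+z^{-1}}{2}\right)=\frac{(1-q)^2}{(1-qz)(1-qz^{-1})},
\]
which factors as the product of two geometric generating functions of parameter $q$. This matches one integer time step of the particle system, which decomposes into a half-step of geometric left jumps ($n\to n+\half$) followed by a half-step of geometric right jumps ($n+\half\to n+1$). In particular the wall-particle kernel $R$, describing the motion of $X^k_{(k+1)/2}$ at a half-step, is the law of $|x+\xi-\xi'|$ for two i.i.d.\ geometric random variables, and its generating function in $z$ is precisely this $\phi_\al$.

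The level-$k$ projection of $\tilde{X}$ can then be handled by a Karlin--McGregor / reflection-principle argument applied to the $r=\lfloor(k+1)/2\rfloor$ particles on level $k$. After marginalizing over the neighboring levels, the pushing and blocking inherited from the interlacing constraint reduce to a non-intersection condition on reflected walkers. This produces a determinantal transition kernel whose single-particle entries are exactly the integrals $I_a^{\phi_\al}(\mu_i-i+r,\lambda_j-j+r)$, with $a=\half$ for $k$ even and $a=-\half$ for $k$ odd, and whose combinatorial prefactor matches the dimension ratio $\dim_k\lambda/\dim_k\mu$ appearing in $T_k^{\phi_\al}$ via the Weyl dimension formula for $SO(k)$.

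The main obstacle will be this Karlin--McGregor step in the presence of a reflecting wall, and in particular the different treatment of the wall particle $X^k_{(k+1)/2}$ whose update rule differs from the bulk. One must verify that the single-particle kernel arising from the pushed/blocked reflected walker is exactly $I_a^{\phi_\al}$. The bridge is provided by the two explicit formulas for $\mathsf{J}^{(\pm\half,-\half)}_s((z+z^{-1})/2)$ stated in the excerpt: they convert the integral $I_a^{\phi_\al}(l,s)$ into a contour integral in $z$ which, using the factorized form of $\phi_\al$, is recognized as the one-step transition probability of a reflected geometric random walk from $l$ to $s$, so that the two determinantal kernels match entry-by-entry.
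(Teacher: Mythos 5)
Your high-level skeleton is sound and matches the paper's: both processes start at $\mathbf{0}$, the $k$th-level projections are Markov, so it suffices to show that the one-step level-$k$ kernel of the particle system equals $T_k^{\phi_\alpha}$ (this is exactly the paper's Proposition 3.1), and the substitution $x=(z+z^{-1})/2$ together with the factorization $\phi_\alpha\left(\tfrac{z+z^{-1}}{2}\right)=\tfrac{(1-q)^2}{(1-qz)(1-qz^{-1})}$ is indeed how the paper evaluates $I^{\phi_\alpha}_{\pm 1/2}$. But the central identity is only asserted, not proved, and the route you propose for it does not work as stated. First, you cannot get the level-$k$ kernel from a Karlin--McGregor/reflection argument: the jumps are geometric, hence not skip-free, so particles can leapfrog without ever coinciding and the classical non-intersection determinant is not a transition probability; moreover the level-$k$ particles are pushed and blocked by level $k-1$, so the fact that the projection is autonomous with an explicit kernel is itself a nontrivial theorem. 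The paper does not derive it: it imports Theorem 7.1 of \cite{kn:D} (a Pieri-type/intertwining result), which writes $P_k(\lambda,\beta)$ as a sum over an intermediate interlacing configuration $c\prec\lambda,\beta$ weighted by $q^{\sum(\lambda_i+\beta_i-2c_i)}$ and a dimension ratio. Your proposal would have to either cite this result (as the paper does) or reprove it, and the reflection-principle sketch does not do so.

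Second, even granting a determinantal expression, the matching with $T_k^{\phi_\alpha}$ is where the actual work lies, and it is absent. The paper converts the sum over $c$ into a single determinant via a Cauchy--Binet lemma (Lemma 2.1 of \cite{kn:BK}) together with a determinant-of-indicators lemma, and then matches entries with the explicitly computed $I^{\phi_\alpha}_{\pm1/2}$ (its Lemma 3.2). Note that $I^{\phi_\alpha}_{1/2}(l,k)=\tfrac{1-q}{1+q}\left(q^{|k-l|}-q^{k+l+1}\right)$ is an absorbing-type (image-subtracted) kernel, not the reflected-walk kernel; only $I^{\phi_\alpha}_{-1/2}=R$ is, so "recognized as a reflected geometric walk" is wrong for half the levels. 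Worse, on the levels containing the wall particle the sum over $c$ runs over $\mathbb{N}^{r-1}$ while the determinant in $T_k^{\phi_\alpha}$ has size $r$; the paper must border the smaller matrix with a row/column built from $R(\lambda_r,\cdot)$ and perform row/column operations (split into the cases $\lambda_r>\beta_r$ and $\lambda_r\le\beta_r$), plus a separate lemma showing both kernels vanish when $\max(\lambda_r,\beta_r)>\min(\lambda_{r-1},\beta_{r-1})$. None of this is addressed by "the two determinantal kernels match entry-by-entry," so the proposal has a genuine gap at precisely the step that constitutes the proof.
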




Numerical calculations made by the author indicate that the fixed time marginals on multiple levels should also be identical, assuming the densely packed initial conditions. The exact statement is below:
\begin{conjecture}\label{Conjecture}
For any time $n\geq 0$,
\[
\mathbb{P}(\boldsymbol{X}(n)=\boldsymbol{\lambda})=T^{\phi_{\alpha}^n}(\boldsymbol{0},\boldsymbol{\lambda}).
\]
\end{conjecture}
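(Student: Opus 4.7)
The plan is to reduce the conjecture to a Gibbs-measure property of $T^{\phi_{\alpha}^n}(\mathbf{0},\cdot)$ and then show that the particle system dynamics preserve that property. By the semigroup identity recalled in the excerpt, $T^{\phi_{\alpha}^n}(\mathbf{0},\boldsymbol{\lambda}) = (T^{\phi_{\alpha}})^n(\mathbf{0},\boldsymbol{\lambda})$, so the right-hand side is the $n$-step law of $\boldsymbol{Y}$ starting at $\mathbf{0}$. A direct induction matching transition kernels is impossible, since the abstract explicitly flags a counterexample showing that the multi-level dynamics of $\boldsymbol{X}$ and $\boldsymbol{Y}$ differ.

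First, I would extract the Gibbs structure of $T^{\phi_{\alpha}^n}(\mathbf{0},\cdot)$. Since projecting to the top level recovers $T_k^{\phi_{\alpha}^n}(0,\cdot)$, the formula for $T^{\phi}(\boldsymbol{\mu},\boldsymbol{\lambda})$ immediately factorizes the joint law as (top-level marginal) times (conditional law of $\lambda^{(1)},\ldots,\lambda^{(k-1)}$ given $\lambda^{(k)}$), where the conditional piece is an $n$-independent product of branching weights $T^j_{j-1}(\lambda^{(j)},\lambda^{(j-1)})$, suitably normalized by the $\Delta^j_{j-1}$ factors. This step is bookkeeping, carried out directly from the definitions of $T^{\phi}$, $\Delta^j_{j-1}$ and $T^j_{j-1}$.

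Next, show by induction on $n$ that $\boldsymbol{X}(n)$ has exactly this Gibbs structure. The base case $n=0$ is immediate from $\boldsymbol{X}(0)=\mathbf{0}$. For the inductive step, write out the conditional law of $\boldsymbol{X}(n+1)$ given the top-level trajectory by composing the left-jump round at time $n+\tfrac{1}{2}$ with the right-jump round at time $n+1$, and match it term-by-term against the branching product. Once this is in place, Theorem \ref{MainTheorem} supplies the matching top-level marginal and the conjecture follows from uniqueness of the Gibbs decomposition.

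The main obstacle is the inductive Gibbs-preservation step. The particle rules near the wall are delicate: for odd $k$ the particle $X^k_{(k+1)/2}$ jumps via the reflected kernel $R$ rather than the raw geometric jumps of the other particles, and the branching weights $\varkappa^k_{k-1}$ distinguish the cases $\mu_{r/2}=0$ and $\mu_{r/2}>0$. Matching the push- and block-interactions against these case-split weights requires a combinatorial telescoping identity that must be strictly weaker than identifying the full Markov kernels (since those actually differ); finding and verifying that identity, presumably a multi-level generalization of the argument behind Theorem \ref{MainTheorem} and ultimately rooted in the Pieri rules for $O(\infty)$, is where the real work lies.
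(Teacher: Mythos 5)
This statement is left as an open conjecture in the paper: the author supports it only by numerical calculations and explicitly says the multi-level identification ``is not pursued here,'' so there is no proof in the paper to compare against. Your proposal does not close that gap. It is a strategy outline whose decisive step --- showing that the sequential push-block dynamics of $\boldsymbol{X}$ preserve the Gibbs (branching) structure of $T^{\phi_{\alpha}^n}(\mathbf{0},\cdot)$, i.e.\ the ``combinatorial telescoping identity'' you defer to at the end --- is exactly the mathematical content of Conjecture \ref{Conjecture}. Given Theorem \ref{MainTheorem} (which pins the level-$k$ marginal) and the Gibbs property of $T^{\phi_{\alpha}^n}(\mathbf{0},\cdot)$, the conjecture is essentially \emph{equivalent} to the statement that $\boldsymbol{X}(n)$ is Gibbs at every integer time; asserting that this ``must be strictly weaker'' than matching the full kernels and ``presumably'' follows from a multi-level analogue of the Pieri-rule computation is a restatement of the problem, not an argument. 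The wall interactions you flag (the reflected kernel $R$ for $X^k_{(k+1)/2}$, the case-split weights $\varkappa^k_{k-1}$, and the blocking illustrated by the paper's own counterexample $\boldsymbol{X}(n)=(0,1,(1,0))$) are precisely what make it unclear that a half-step of left jumps followed by a half-step of right jumps returns a Gibbs measure; nothing in the proposal verifies this even for the smallest nontrivial configuration.

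Two smaller points. First, the ``bookkeeping'' step also needs care: the displayed formula for $T^{\phi}(\boldsymbol{\mu},\boldsymbol{\lambda})$ with the $\Delta^j_{j-1}$ normalizations does not by itself give the claimed factorization of $T^{\phi_{\alpha}^n}(\mathbf{0},\cdot)$ into (top-level marginal) $\times$ ($n$-independent conditional built from $T^j_{j-1}$); that property is inherited from the construction in \cite{kn:BK} (commutation relations of the type $T^{\phi}_j T^j_{j-1} \propto T^j_{j-1} T^{\phi}_{j-1}$) and must be quoted or rederived, including the semigroup identity which the paper states only for the initial condition $\mathbf{0}$. Second, since the state space has infinitely many levels, you need to phrase the Gibbs/uniqueness argument consistently for each finite truncation $(\lambda^{(1)}\prec\cdots\prec\lambda^{(k)})$ and check compatibility under increasing $k$, and to keep track of the shift between $X^k$ and $\tilde{X}^k$ when invoking Theorem \ref{MainTheorem}. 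As it stands, the proposal is a reasonable plan of attack but not a proof, and the statement remains open.
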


Note that without the fixed-time assumption, the conjecture is false. For example, 
\[
\mathbb{P}(\boldsymbol{X}(n+1)=(0,0,(0,0)) \vert \boldsymbol{X}(n) = (0,1,(1,0)))=0,
\]
by the fact that $X_1^2$ prevents $X_1^3$ from jumping to $0$. However, 
\[
T^{\phi_{\alpha}}((0,1,(1,0)),(0,0,(0,0)))\neq 0,
\]
since none of the terms in the definition of $T^{\phi_{\alpha}}$ is zero.

\section{Proof of theorem \ref{MainTheorem}}\label{3}

Let $P_k(\lambda,\beta)$ denote the transition kernel of $X$ on the $k$th level. In other words
\[
P_k(\lambda,\beta)=\mathbb{P}(X^k(n+1)=(\beta_1,\beta_2,\ldots,\beta_{\lfloor (k+1)/2\rfloor}) \vert X^k(n)= (\lambda_1,\lambda_2,\ldots,\lambda_{\lfloor (k+1)/2\rfloor}))
\]
By Theorem 7.1 of \cite{kn:D}, 
\[
P_{2r}(\lambda,\beta)=\sum_{c\in\N^r, c\prec\lambda,\beta} (1-q)^{2r}\frac{\dim_{2r+1}\beta}{\dim_{2r+1}\lambda} q^{\sum_{i=1}^r \lambda_i+\beta_i-2c_i}\left(1_{c_r>0}+\frac{1_{c_r=0}}{1+q}\right)
\]
\[
P_{2r+1}(\lambda,\beta)=\sum_{c\in \N^{r}, c\prec\lambda,\beta}(1-q)^{2r+1}\frac{\dim_{2r+2}\beta}{\dim_{2r+2}\lambda}  q^{\sum_{i=1}^{r} \lambda_i+\beta_i-2c_i} R(\lambda_{r+1},\beta_{r+1}).
\]

In this section, we prove Theorem \ref{MainTheorem}. Set $\phi=\phi_{\alpha}$, where $\alpha=\tfrac{2q}{1-q}$. Since $\boldsymbol{X}(0)=\boldsymbol{Y}(0)=0$, the following proposition suffices.
\begin{proposition}\label{Prop}
For $a=\pm\half$, $T_{k}^{\phi_{\alpha}}=P_{k}$.
\end{proposition}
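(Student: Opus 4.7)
The strategy is to reduce $T_k^{\phi_\alpha} = P_k$ to a combinatorial determinant identity by first computing the matrix entries $I_a^{\phi_\alpha}(l,s)$ in closed form. The dimension ratio appears on both sides and cancels immediately, so the remaining task is to match the determinant in the definition of $T_{r,a}^{\phi_\alpha}$ against the explicit interlacing sum in $P_k$ from \cite{kn:D}.

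The first step is to evaluate $I_a^{\phi_\alpha}(l,s)$. With $\alpha = 2q/(1-q)$, a direct substitution into $x = (z+z^{-1})/2$ produces the factorisation
\[
\phi_\alpha\!\left(\frac{z+z^{-1}}{2}\right) = \frac{(1-q)^2}{(1-qz)(1-qz^{-1})},
\]
and the corresponding Fourier expansion $\phi_\alpha(\cos\theta) = \frac{1-q}{1+q}\sum_{n \in \Z} q^{|n|} e^{in\theta}$. Substituting this together with the trigonometric expressions for $\mathsf{J}_s^{(\pm 1/2, -1/2)}$ from Section \ref{MC} and using the identity $\int_0^\pi \cos(m\theta)\phi_\alpha(\cos\theta)\,d\theta = \tfrac{\pi(1-q)}{1+q}q^{|m|}$, a short calculation yields
\[
I_{-1/2}^{\phi_\alpha}(l, s) = R(l, s), \qquad I_{1/2}^{\phi_\alpha}(l, s) = \frac{(1-q)(q^{|l-s|} - q^{l+s+1})}{1+q}.
\]
Both entries have the form of a single-particle transition kernel for a symmetric geometric random walk reflected at the wall $\{0\}$, the image term $+q^{l+s}$ (Neumann type) or $-q^{l+s+1}$ (Dirichlet type) being produced by the method of images.

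The second step is to match these explicit expressions against the Defosseux formula. I would apply the Lindstr\"om--Gessel--Viennot / Karlin--McGregor lemma to interpret $\det[I_a^{\phi_\alpha}(\mu_i - i + r,\, \lambda_j - j + r)]$ as a signed sum over $r$-tuples of reflected paths with intermediate positions $c_1 > c_2 > \cdots > c_r$. Undoing the shift $c_i \mapsto c_i - i + r$ turns the non-intersection condition into the interlacing $c \prec \mu,\lambda$, and the geometric weights give exactly the factor $q^{\sum(\mu_i + \lambda_i - 2c_i)}$ appearing in $P_k$. The Dirichlet image term $-q^{l+s+1}$ in the $a = 1/2$ case generates the boundary correction $\bigl(1_{c_r > 0} + 1_{c_r = 0}/(1+q)\bigr)$ seen in $P_{2r}$; for odd $k = 2r+1$, the $(r+1,r+1)$ diagonal entry of the defining matrix is literally $R(\mu_{r+1}, \lambda_{r+1})$, accounting for the $R$ factor in $P_{2r+1}$.

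The main obstacle will be the bookkeeping in the LGV expansion: one must check that the signs from the permutation sum combine with those from the reflection image terms so that only the valid interlacing configurations $c \prec \mu, \lambda$ survive, and that the boundary correction appears with the precise prefactor $1/(1+q)$ at $c_r = 0$ rather than some other rational function of $q$. A secondary difficulty in the odd $k$ case is that the off-diagonal entries in row and column $r+1$ of the defining matrix are nonzero, so separating the $R(\mu_{r+1}, \lambda_{r+1})$ factor is not a naive Laplace expansion; this should instead follow from the fact that the interlacing constraint $c_r \geq \max(\mu_{r+1}, \lambda_{r+1})$ already decouples the bottom path from the non-intersecting collection above it.
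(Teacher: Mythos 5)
Your first step is sound and matches the paper's Lemma \ref{FirstLemma}: the closed forms you give for $I_{\pm 1/2}^{\phi_\alpha}$ are exactly the paper's (the paper computes them by residues after $x=(z+z^{-1})/2$ rather than by Fourier expansion, which is equivalent), and your plan for the level with the boundary factor $\bigl(1_{c_r>0}+1_{c_r=0}/(1+q)\bigr)$ is essentially the paper's argument run in reverse: the paper writes the interlacing indicator as $\det[\psi_m(c_i-i+r,\lambda_j-j+r)]$ (Lemma \ref{Interlace}), sets $f_{s,m}(l)=q^{l-s}\psi_m(s,l)$ with $m=1$ on one side and $m=\tfrac{1}{1+q}$ on the other, and collapses the sum over $s_1>\dots>s_r\ge 0$ by the Cauchy--Binet-type Lemma 2.1 of \cite{kn:BK}, verifying directly that $\sum_s f_{s,1}(x)f_{s,\frac{1}{1+q}}(y)=(1-q)^{-2}I^{\phi}_{1/2}(x,y)$. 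Your ``LGV with image terms'' phrasing is the same computation, and the sign bookkeeping you worry about is exactly what Lemma \ref{Interlace} packages.

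The genuine gap is in the other parity, where the determinant defining $T^{\phi}_{r,-1/2}$ is $r\times r$ while the Defosseux sum runs over only $r-1$ intermediate coordinates, and your proposed resolution --- that the constraint $c_{r-1}\ge\max(\lambda_r,\beta_r)$ ``decouples the bottom path'' --- is not correct as stated. After restricting to $c\prec\lambda_{\mathrm{red}},\beta_{\mathrm{red}}$ with $c_{r-1}\ge\max(\lambda_r,\beta_r)$ and applying Cauchy--Binet, the entries of the resulting $(r-1)\times(r-1)$ determinant are the truncated sums $\sum_{s\ge\max(\lambda_r,\beta_r)}f_{s,1}(x)f_{s,1}(y)=\frac{q^{x+y-2\max(\lambda_r,\beta_r)+2}-q^{|x-y|}}{q^2-1}$, which still depend on $\lambda_r,\beta_r$; the bottom particle does \emph{not} factor out entrywise, so no Laplace-expansion or path-decoupling argument applies directly. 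The paper closes this by bordering the $(r-1)\times(r-1)$ matrix with a row and column built from $R(\lambda_r,\cdot)$, $R(\cdot,\beta_r)$ (contributing the factor $R(\lambda_r,\beta_r)$) and then performing explicit row operations (for $\lambda_r>\beta_r$) or column operations (for $\lambda_r\le\beta_r$) that convert every entry into $(1-q)^{-2}R(\lambda_i-i+r,\beta_j-j+r)$, using inequalities such as $\beta_j-j+r\ge\beta_{r-1}\ge\min(\lambda_{r-1},\beta_{r-1})\ge\lambda_r$; it also needs a separate lemma showing that when $\max(\lambda_r,\beta_r)>\min(\lambda_{r-1},\beta_{r-1})$ both $P$ and $T^{\phi}$ vanish (two proportional columns in the $R$-determinant). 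None of this machinery --- the bordering, the two cases, the degenerate-case lemma --- is present or replaced by an alternative mechanism in your proposal, so as written the identity for the odd levels remains unproved.
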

We start with a few lemmas.

\begin{lemma}\label{FirstLemma} Let 
\[
\phi(x)=\frac{1}{1+\alpha(1-x)+\frac{\alpha^2}{2}(1-x)},\ \ \alpha=\frac{2q}{1-q}.
\]
Then $I_{-1/2}^{\phi}(l,k)=R(l,k)$ and 
\[
I_{1/2}^{\phi}(l,k)=\frac{q-1}{q+1}\left(q^{k+l+1}-q^{\vert k-l \vert}\right)
\]
\end{lemma}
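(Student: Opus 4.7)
\medskip

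The plan is to unwind both $I_a^\phi$ integrals via the substitution $x=\cos\theta$, reduce them to standard Fourier coefficient computations, and then compare to the explicit Poisson-type expansion of $\phi(\cos\theta)$.

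\medskip

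\textbf{Step 1: Simplify $\phi$.} With $\alpha=2q/(1-q)$, I would first compute $\alpha+\alpha^2/2 = 2q/(1-q)^2$, so
\[
\phi(x) \;=\; \frac{1}{1+\frac{2q}{(1-q)^2}(1-x)} \;=\; \frac{(1-q)^2}{1+q^2-2qx}.
\]
Setting $x=\cos\theta$ gives $\phi(\cos\theta) = (1-q)^2/|1-qe^{i\theta}|^2$, so by the Poisson kernel identity
\[
\phi(\cos\theta)\;=\;\frac{1-q}{1+q}\Bigl(1+2\sum_{m\ge 1} q^m\cos(m\theta)\Bigr).
\]

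\medskip

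\textbf{Step 2: The case $a=-1/2$.} Under $x=\cos\theta$ the weight becomes $(1-x)^{-1/2}(1+x)^{-1/2}\,dx = d\theta$ on $[0,\pi]$, and the second Jacobi identity gives $\mathsf{J}_s^{(-1/2,-1/2)}(\cos\theta)=\cos(s\theta)$. Thus
\[
I_{-1/2}^\phi(l,s) \;=\; \frac{W^{(-1/2,-1/2)}(s)}{\pi}\int_0^\pi \cos(s\theta)\cos(l\theta)\,\phi(\cos\theta)\,d\theta.
\]
Expanding $\cos(s\theta)\cos(l\theta)=\tfrac12[\cos(|s-l|\theta)+\cos((s+l)\theta)]$ and plugging in the Fourier series for $\phi(\cos\theta)$, orthogonality of cosines on $[0,\pi]$ picks out exactly the terms $q^{|s-l|}$ and $q^{s+l}$. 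The factor $W^{(-1/2,-1/2)}$ is tailored precisely to absorb the two cases $s=0$ and $s>0$, yielding
\[
I_{-1/2}^\phi(l,s) \;=\; \frac{1-q}{1+q}\cdot\frac{q^{|l-s|}+q^{l+s}}{1+\mathbf{1}_{s=0}}\;=\;R(l,s).
\]

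\medskip

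\textbf{Step 3: The case $a=1/2$.} Here the first Jacobi identity and elementary half-angle formulas give
\[
\mathsf{J}_s^{(1/2,-1/2)}(\cos\theta)\;=\;\frac{\sin((s+\tfrac12)\theta)}{\sin(\theta/2)},\qquad (1-x)^{1/2}(1+x)^{-1/2}\,dx \;=\; 2\sin^2(\theta/2)\,d\theta,
\]
so the $\sin(\theta/2)^{-2}$ factors from the two Jacobi polynomials cancel against the weight, leaving
\[
I_{1/2}^\phi(l,s) \;=\; \frac{2}{\pi}\int_0^\pi \sin\!\bigl((s+\tfrac12)\theta\bigr)\sin\!\bigl((l+\tfrac12)\theta\bigr)\,\phi(\cos\theta)\,d\theta.
\]
Product-to-sum yields $2\sin((s+\tfrac12)\theta)\sin((l+\tfrac12)\theta) = \cos((s-l)\theta)-\cos((s+l+1)\theta)$, and again plugging in the Fourier expansion of $\phi(\cos\theta)$ and applying orthogonality gives
\[
I_{1/2}^\phi(l,s) \;=\; \frac{1-q}{1+q}\bigl(q^{|s-l|}-q^{s+l+1}\bigr)\;=\;\frac{q-1}{q+1}\bigl(q^{s+l+1}-q^{|s-l|}\bigr),
\]
uniformly in whether $s=l$ or $s\ne l$ (one needs to check that the "diagonal" case $s=l$ reproduces the same formula, using $q^{|s-l|}=1$).

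\medskip

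\textbf{Main obstacle.} Conceptually the lemma is just Fourier expansion of a Poisson kernel; the only delicate point is bookkeeping the various weight and normalization factors so that the trigonometric substitution neatly cancels the Jacobi-weight singularities at $\pm1$, and handling the edge cases $s=0$ (in the $a=-1/2$ case, where $W^{(-1/2,-1/2)}$ switches value) and $s=l$ (in the $a=1/2$ case, where $|s-l|=0$ must merge with the $s\ne l$ formula). Once these are correctly accounted for, comparison with $R(l,s)$ and the claimed closed form is immediate.
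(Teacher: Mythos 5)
Your proposal is correct. All the computations check out: $\alpha+\alpha^2/2=2q/(1-q)^2$ so $\phi(\cos\theta)=(1-q)^2/(1+q^2-2q\cos\theta)$ is indeed $\tfrac{1-q}{1+q}$ times the Poisson kernel, the substitution $x=\cos\theta$ turns the Jacobi weights and polynomials into pure trigonometric data exactly as you state, and the cosine-orthogonality bookkeeping (including the $W^{(-1/2,-1/2)}$ factor absorbing the $s=0$ case, and the uniform validity of $\int_0^\pi\cos(m\theta)\phi(\cos\theta)\,d\theta=\pi\tfrac{1-q}{1+q}q^{|m|}$ at $m=0$, which settles your $s=l$ worry) reproduces both claimed formulas. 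The paper does the same computation in complex-variable clothing: it substitutes $x=(z+z^{-1})/2$, writes the integral as a contour integral over $|z|=1$ against $\tfrac{(1-q)^2}{(1-qz)(z-q)}$, and sums the residues at $z=q$ and $z=0$ (the latter via a Laurent expansion of $1/((1-qz)(z-q))$). Your route replaces the residue calculus by the Poisson-kernel Fourier expansion and orthogonality of cosines, which is slightly more elementary and handles both values of $a$ by one uniform mechanism; the paper's residue computation is more in line with the contour-deformation techniques it needs later for the asymptotics, but the two arguments are term-for-term equivalent (the $z=q$ and $z=0$ residues correspond to the two halves of the symmetric Fourier series).
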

\begin{proof}
Substitute $x=(z+z^{-1})/2$. Then
\[
I_{-1/2}^{\phi}(l,k)=\frac{W^{(-1/2,-1/2)}(k)}{2\pi i}\oint_{\vert z\vert=1}\frac{z^k+z^{-k}}{2} \frac{z^l+z^{-l}}{2}\frac{(1-q)^2}{(1-qz)(z-q)}dz,
\]
which has residues at $q$ and $0$. The residue at $z=q$ is 
\[
W^{(-1/2,-1/2)}(k)\frac{q^k+q^{-k}}{2} \frac{q^l+q^{-l}}{2}\frac{1-q}{1+q}.
\]
Using the expansion
\[
\frac{1}{(1-qz)(z-q)}=\sum_{m=0}^{\infty}\frac{q^{m+1}-q^{-m-1}}{1-q^2}z^m,
\]
the residue at $z=0$ is
\[
W^{(-1/2,-1/2)}(k)\frac{1-q}{1+q}\left(\frac{q^{k+l}-q^{-k-l}}{4} + \frac{q^{\vert k-l\vert}-q^{-\vert k-l\vert}}{4} \right),
\]
so the total contribution is
\[
I_{-1/2}^{\phi}(l,k)=\frac{W^{(-1/2,-1/2)}(k)}{2}\frac{1-q}{1+q}(q^{k+l}+q^{\vert k-l\vert})=R(l,k).
\]
For $a=1/2$,
\[
-\frac{1}{4\pi i}\oint_{\vert z\vert=1}(z^{k+1/2}-z^{-k-1/2}) (z^{l+1/2}-z^{-l-1/2})\frac{(1-q)^2}{(1-qz)(z-q)}dz.
\]
The residue at $z=q$ is 
\[
-\frac{1}{2}(q^{k+1/2}-q^{-k-1/2}) (q^{l+1/2}-q^{-l-1/2})\frac{1-q}{1+q}
\]
and the residue at $z=0$ is
\[
-\frac{1}{2}\frac{1-q}{1+q}\left(q^{k+l+1}-q^{-k-l-1} - q^{\vert k-l\vert} + q^{-\vert k-l\vert} \right).
\]
\end{proof}

\begin{lemma}\label{Interlace}
Let  $c=(c_1\geq c_2\geq \ldots \geq c_r)$ and $\lambda=(\lambda_1\geq \lambda_2 \geq \ldots \geq \lambda_r)$. Set  
\[
\psi_m(s,l)=
\begin{cases}
m, \ \ \text{if}\ l\geq s=0\\
1, \ \ \text{if}\ l\geq s>0\\
0, \ \ \text{if}\ l<s.
\end{cases}
\]
Then 
\[
\det[\psi_m(c_i-i+r,\lambda_j-j+r)]=
\begin{cases}
m, \ \ \text{if}\ c\prec\lambda,\ c_r=0\\
1, \ \ \text{if}\ c\prec\lambda,\ c_r>0\\
0, \ \ \text{if}\ c\not\prec\lambda.
\end{cases}
\]


\end{lemma}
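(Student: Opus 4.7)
My plan is to shift to strict variables, peel off the factor $m$ via multilinearity, and analyze the resulting $0/1$ determinant by elementary combinatorics.

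First, introduce $s_i := c_i + r - i$ and $l_j := \lambda_j + r - j$. Non-increasingness of $c$ and $\lambda$ makes both sequences strictly decreasing, with $s_i \geq r - i \geq 1$ for every $i < r$, and a direct check shows the interlacing condition $c \prec \lambda$ is equivalent to
\[
l_1 \geq s_1 > l_2 \geq s_2 > \cdots > l_r \geq s_r \geq 0.
\]
Because $s_i > 0$ for $i < r$, only the bottom row of the matrix $[\psi_m(s_i,l_j)]$ can carry the factor $m$, and it does so precisely when $s_r = c_r = 0$; in that case the entire bottom row equals $(m,m,\ldots,m)$. Multilinearity of the determinant in the last row then yields
\[
\det[\psi_m(s_i,l_j)] \;=\; m^{\mathbf{1}_{c_r=0}} \cdot \det[\mathbf{1}_{l_j \geq s_i}],
\]
so it suffices to prove that $\det[\mathbf{1}_{l_j \geq s_i}] = 1$ when $c \prec \lambda$ and $= 0$ otherwise.

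For the $0/1$ matrix $A_{ij} := \mathbf{1}_{l_j \geq s_i}$, set $N_i := \max\{j : l_j \geq s_i\}$ (with the convention $N_i = 0$ if no such $j$ exists). Since $s_i$ decreases in $i$, the sequence $(N_i)$ is non-decreasing. When $c \prec \lambda$, the equivalent inequality chain above forces $N_i = i$ for every $i$, so $A$ is lower triangular with $1$s on and below the diagonal and $\det A = 1$. When $c \not\prec \lambda$, I split into two subcases. If $N_{i^*} < i^*$ for some $i^*$, then rows $1,\ldots,i^*$ of $A$ all have support contained in $\{1,\ldots,N_{i^*}\}$, a set of size strictly less than $i^*$, so these rows are linearly dependent. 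Otherwise $N_i \geq i$ for every $i$ but some $N_{i_0} > i_0$; since $(N_i)$ is a non-decreasing sequence with $i \leq N_i \leq r$, a strictly increasing such sequence would have to satisfy $N_i = i$ identically, so by pigeonhole two consecutive values coincide, $N_j = N_{j+1}$, and the corresponding two rows of $A$ are literally identical.

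The step I expect to require the most care is the translation of the combinatorial interlacing condition into the analytic inequality chain in $(s,l)$, because the integer-shift conversion between weak and strict inequalities has to be tracked carefully at every position; once that translation is in place, multilinearity gives the reduction to a $0/1$ matrix and the rest is a short pigeonhole argument.
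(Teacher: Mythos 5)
Your proof is correct. Note that the paper does not actually write out an argument for this lemma: it simply declares the proof standard and cites Lemma 3.8 of \cite{kn:BK}. What you supply is, in effect, that standard argument made fully explicit, and every step checks out: the translation $c\prec\lambda\;\Leftrightarrow\;l_1\geq s_1>l_2\geq s_2>\cdots>l_r\geq s_r\geq 0$ in the shifted coordinates $s_i=c_i-i+r$, $l_j=\lambda_j-j+r$ is exact; the observation that only the last row can see the value $m$ (since $s_i\geq r-i\geq 1$ for $i<r$) and the multilinearity step giving the factor $m^{\mathbf{1}_{c_r=0}}$ are right (here you implicitly use, as the paper's context guarantees, that the entries of $c$ and $\lambda$ are nonnegative integers, so that $l_j\geq 0$ and the bottom row is constant equal to $m$ when $c_r=0$); and the analysis of the $0/1$ matrix via the non-decreasing sequence $N_i$ is complete — $N_i=i$ for all $i$ is equivalent to interlacing and gives a unitriangular matrix, while failure of interlacing yields either $N_{i^*}<i^*$ (rows $1,\dots,i^*$ supported on fewer than $i^*$ columns, hence dependent) or, by your pigeonhole argument on a non-decreasing sequence with $i\leq N_i\leq r$, two equal consecutive rows (using that the $l_j$ are strictly decreasing, so each row's support is an initial segment). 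Compared with the paper's one-line citation, your route costs a page but buys a self-contained, purely elementary verification that does not require the reader to unpack the corresponding lemma of \cite{kn:BK}; the content is the same triangularity-versus-degeneracy dichotomy that underlies the cited result.
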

\begin{proof}
The proof is standard, see e.g. Lemma 3.8 of \cite{kn:BK}.
\end{proof}


Now return to the proof of Theorem \ref{MainTheorem}. Start with the odd case. By the lemma, we can write
\[
P_{2r+1}(\lambda,\beta)=(1-q)^{2r}\frac{\dim_{2r+1}\beta}{\dim_{2r+1}\lambda}\sum_{s_1>\ldots>s_r\geq 0} \det[f_{s_i,1}(\lambda_j-j+r)] \det[f_{s_i,\frac{1}{1+q}}(\beta_j-j+r)],
\]
where
\[
f_{s,m}(l)=q^{l-s}\psi_m(s,l).
\]
Thus, by Lemma 2.1 of \cite{kn:BK}, 
\[
P_{2r+1}(\lambda,\beta)=(1-q)^{2r}\frac{\dim_{2r+1}\beta}{\dim_{2r+1}\lambda}\det\left[\sum_{s=0}^{\infty}f_{s,1}(\lambda_i-i+r)f_{s,\frac{1}{1+q}}(\beta_j-j+r)\right].
\]
A simple calculation shows that 
\[
\sum_{s=0}^{\infty}f_{s,1}(x)f_{s,\frac{1}{1+q}}(y)=
\begin{cases}
\dfrac{1}{1+q}q^{x+y}, \ \text{if}\ \ \min(x,y)=0 ,  \\
\dfrac{q^{x+y+1}-q^{\vert x-y\vert}}{q^2-1}, \ \text{otherwise},
\end{cases}
\]
which, by Lemma \ref{FirstLemma}, equals $(1-q)^{-2}I_{1/2}^{\phi}(x,y)$.

Now proceed to the even case. Lemma 2.1 from \cite{kn:BK} is not immediately applicable, because we are summing over elements of $\mathbb{N}^{r-1}$ while the determinants are of size $r$. Notice, however, that $c\prec\lambda,\beta$ if and only if $c\prec\lambda_{\text{red}},\beta_{\text{red}}$ (where $\lambda_{\text{red}},\beta_{\text{red}}$ denote $(\lambda_1,\ldots,\lambda_{r-1}),(\beta_1,\ldots,\beta_{r-1})$) and $c_{r-1}\geq\max(\lambda_r,\beta_r)$. Thus
\begin{multline*}
(1-q)^{2r-1}\frac{\dim_{2r}\beta}{\dim_{2r}\lambda}\frac{q^{\vert\lambda_r-\beta_r\vert}+q^{\lambda_r+\beta_r}}{1+1_{\beta_r=0}}\\
\times\sum_{s_1>s_2>\ldots> s_{r-1}\geq\max(\lambda_r,\beta_r)} \det[f_{s_i,1}(\lambda_j-j+r-1)]_1^{r-1}\det[f_{s_i,1}(\beta_j-j+r-1)]_1^{r-1}\\
=(1-q)^{2r-2}R(\lambda_r,\beta_r)\frac{\dim_{2r}\beta}{\dim_{2r}\lambda}\det\left[\sum_{s=\max(\lambda_r,\beta_r)}^{\infty}f_{s,1}(\lambda_i-i+r-1)f_{s,1}(\beta_j-j+r-1)\right]_1^{r-1}.
\end{multline*}
A straightforward calculation shows that if $\max(\lambda_r,\beta_r)\leq\min(x,y)$, then
\[
\sum_{s=\max(\lambda_r,\beta_r)}^{\infty}f_{k,1}(x)f_{k,1}(y)=\frac{q^{x+y-2\max(\lambda_r,\beta_r)+2}-q^{\vert x-y\vert}}{q^2-1}.
\]
To deal with the case $\max(\lambda_r,\beta_r)>\min(x,y)$, we use the following lemma.
\begin{lemma}
If $\max(\lambda_r,\beta_r)>\min(\lambda_{r-1},\beta_{r-1})$, then $P_{2r-1}(\lambda,\beta)=T^{\phi}_{2r-1}(\lambda,\beta)=0$.
\end{lemma}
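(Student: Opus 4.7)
The plan is to handle $P_{2r-1}(\lambda, \beta) = 0$ and $T^{\phi}_{2r-1}(\lambda, \beta) = 0$ by completely different arguments. The first is a quick support calculation: the explicit formula for $P_{2r-1}$ (the $P_{2r+1}$ formula stated before Lemma \ref{FirstLemma}, with $r$ shifted down by one) is a sum over $c\in\N^{r-1}$ satisfying $c\prec\lambda$ and $c\prec\beta$. The interlacing forces
\[
\max(\lambda_r, \beta_r) \leq c_{r-1} \leq \min(\lambda_{r-1}, \beta_{r-1}),
\]
which is empty under the hypothesis, so the sum is zero.

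For $T^{\phi}_{2r-1}(\lambda, \beta) = \det[R(a_i, b_j)]_{i,j=1}^r \cdot \dim_{2r}\beta / \dim_{2r}\lambda$, where $a_i = \lambda_i + r - i$ and $b_j = \beta_j + r - j$, I would exploit a rank-one factorization of $R$ visible from Lemma \ref{FirstLemma}. Ignoring the boundary factor $1 + 1_{b=0}$, one reads off $R(a,b) \propto q^a (q^{-b}+q^b)$ whenever $a \geq b$ and $R(a,b) \propto (q^{-a}+q^a) q^b$ whenever $a \leq b$, so on each half-region $R$ is rank one. The hypothesis leaves only two sub-cases (the other two being ruled out by the monotonicity of $\lambda$ or $\beta$): either $\lambda_r > \beta_{r-1}$ or $\beta_r > \lambda_{r-1}$. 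In the first, $\lambda_i \geq \lambda_r > \beta_{r-1} \geq \beta_j$ for all $i \in \{1,\ldots,r\}$ and $j \in \{r-1, r\}$, which translates to $a_i \geq b_j$ on that $r \times 2$ block; hence columns $r-1$ and $r$ of $[R(a_i, b_j)]$ both lie on the line spanned by $(q^{a_1}, \ldots, q^{a_r})^{\top}$, and the determinant vanishes. The second sub-case is mirror-symmetric, making rows $r-1$ and $r$ proportional to $(q^{b_1}, \ldots, q^{b_r})$.

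The main obstacle will be the bookkeeping around the boundary factor $1 + 1_{b=0}$ in $R$. In the sub-case $\lambda_r > \beta_{r-1}$ one can have $b_r = 0$ (namely when $\beta_{r-1} = \beta_r = 0$), and I need to check that the rank-one form is still compatible at $b = 0$, where $R(a, 0) = (1-q) q^a / (1+q)$, so that columns $r-1$ and $r$ remain proportional. In the sub-case $\beta_r > \lambda_{r-1}$ this issue does not arise, since $\beta_r > \lambda_{r-1} \geq 0$ forces $\beta_r \geq 1$, and hence $b_j \geq 1$ for every $j \in \{1,\ldots,r\}$, so the formula $R(a,b) \propto (q^{-a}+q^a) q^b$ applies uniformly along rows $r-1$ and $r$.
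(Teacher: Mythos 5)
Your proposal is correct and follows essentially the same route as the paper: $P_{2r-1}(\lambda,\beta)=0$ because the set $\{c\in\N^{r-1}:c\prec\lambda,\beta\}$ is empty, and $T^{\phi}_{2r-1}(\lambda,\beta)=0$ because in the sub-case $\lambda_r>\beta_{r-1}$ (resp.\ $\beta_r>\lambda_{r-1}$) two columns (resp.\ rows) of the matrix $[R(\lambda_i-i+r,\beta_j-j+r)]$ are proportional, exactly the degeneracy the paper exploits. Your explicit rank-one formula $R(a,b)\propto q^a(q^{-b}+q^b)/(1+1_{b=0})$ for $a\geq b$, together with the check at $b=0$, is just a more carefully spelled-out version of the paper's proportionality claim, so no further comparison is needed.
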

\begin{proof}
The fact that $P_{2r-1}(\lambda,\beta)=0$ follows immediately from the description of the interacting particle system, or from the fact that $\{c\in\mathbb{N}^{r-1}:c\prec\lambda,\beta\}$ is empty.

Now it remains to show that $T^{\phi}_{r,-1/2}=0$. If $\lambda_r>\beta_{r-1}$, then $\lambda_1\geq\lambda_2\geq\ldots\geq\lambda_r>\beta_{r-1}\geq\beta_r$, so 
\[
(r-1)\text{th column} = \frac{R(\lambda_{r-1}+1,\beta_{r-1}+1)}{R(\lambda_r,\beta_r)}(r\text{th column}),
\]
implying the determinant is zero. An identical argument holds if $\beta_r>\lambda_{r-1}$.
\end{proof}

For the rest of the proof, assume that $\max(\lambda_r,\beta_r)\leq\min(\lambda_{r-1},\beta_{r-1})$.

Notice now that the determinant in $T_{2r-1}^{\phi}$ is of size $r$, which needs to be compared to a determinant of size $r-1$. To show that the larger determinant is $(1-q)^{2r-2}R(\lambda_r,\beta_r)$ times the smaller determinant, we perform a sequence of operations to the smaller matrix. These operations are slightly different for $\lambda_r>\beta_r$ and $\lambda_r\leq\beta_r$. Consider $\lambda_r>\beta_r$ for now.

First, add a row and a column to the matrix of size $r-1$. The $r$th column is $[0,0,0,\ldots,0,R(\lambda_r,\beta_r)]$ and the $r$th row is $[R(\lambda_r,\beta_1-1+r),R(\lambda_r,\beta_2-2+r),\ldots,R(\lambda_r,\beta_{r-1}+1),R(\lambda_r,\beta_r)]$.
This multiplies the determinant by $R(\lambda_r,\beta_r)$.

Second, for $1\leq i\leq r-1$, perform row operations by replacing the $i$th row with
\[
i\text{th row} + \frac{1}{(q-1)^2}\frac{R(\lambda_i-i+r,\beta_r)}{R(\lambda_r,\beta_r)}(r\text{th row}).
\]
For $1\leq i,j\leq r-1$ and letting $(x,y)=(\lambda_i-i+r,\beta_j-j+r)$, the $(i,j)$ entry is
\begin{eqnarray*}
&&\frac{q^{x+y-2\max(\lambda_r,\beta_r)}-q^{\vert x-y\vert}}{q^2-1}+ \frac{1}{(q-1)^2}\frac{R(x,\beta_r)}{R(\lambda_r,\beta_r)}R(\lambda_r,y)\\
&=&\frac{q^{x+y-2\max(\lambda_r,\beta_r)}-q^{\vert x-y\vert}}{q^2-1}-\dfrac{q^{x-\lambda_r}}{(1+1_{y=0})(q^2-1)}( q^{\lambda_r+y} + q^{\vert \lambda_r-y\vert})\\
&=&\frac{-q^{x+y}-q^{\vert x-y\vert}}{q^2-1}=(1-q)^{-2}R(x,y).
\end{eqnarray*}
Here, we used the fact that $y\geq\beta_{r-1}\geq\min(\lambda_{r-1},\beta_{r-1})\geq\lambda_r$ and $y\geq\lambda_r>\beta_r\geq 0$. For $j=r$, $\lambda_r>y=\beta_r$, so the $(i,j)$ entry is
\[
\frac{-q^{x+y}-q^{\vert x-y\vert}}{(1+1_{y=0})(q^2-1)}=(1-q)^{-2}R(x,y).
\]
Thus, the larger determinant is $(1-q)^{2(r-1)}R(\lambda_r,\beta_r)$ times the larger determinant.

Now consider $\lambda_r\leq\beta_r$. First, add the $r$th row, which is $[0,0,\ldots,0,R(\lambda_r,\beta_r)]$, and add the $r$th column which is $[R(\lambda_1-1+r,\beta_r),R(\lambda_2-2+r,\beta_r),\ldots,R(\lambda_r,\beta_r)]$. This multiplies the determinant by $R(\lambda_r,\beta_r)$. 

Second, for $1\leq j\leq r-1$, perform column operations by replacing tSecond, for $1\leq j\leq r-1$, perform column operations by replacing the $j$th column with
\[
j\text{th column} + \frac{1}{(q-1)^2}\frac{R(\lambda_r,\beta_j-j+r)}{R(\lambda_r,\beta_r)}(r\text{th column}).
\]
Once again, this yields a matrix whose entries are $(1-q)^{-2}R(\lambda_i-i+r,\beta_j-j+r)$, except for the last column, which is $R(\lambda_i-i+r,\beta_r)$.

\section{Asymptotics}\label{4}
Thus far, we have shown that $\tilde{X}^k(n)$ is determinantal with correlation kernel $K(r,a,s_1;r,a,s_2)$. In this section, we will take asymptotics of $K(r_1,a_1,s_1;r_2,a_2,s_2)$, with $(r_1,a_1)$ not necessarily equal to $(r_2,a_2)$. This is because the asymptotic analysis is not much more difficult, and this would be the appropriate limit if Conjecture \ref{Conjecture} were true. Recall that $(r,a)$ corresponds to the $2r+1/2+a$ level. 

\subsection{Discrete Jacobi Kernel}
For $-1< u < 1$ and $a_1,a_2=\pm\half$, define the \textit{discrete Jacobi kernel}
$L(r_1,a_1,s_1,r_2,a_2,s_2;u)$ as follows. If $2r_1+a_1\geq 2r_2+a_2$, then
\begin{multline*}
L(r_1,a_1,s_1,r_2,a_2,s_2;u)\\
=\frac{W^{(a_1,-1/2)}(s_1)}{\pi}\int_u^1 \mathsf{J}_{s_1}^{(a_1,-1/2)}(x)\mathsf{J}_{s_2}^{(a_2,-1/2)}(x)(x-1)^{r_1-r_2}(1-x)^{a_1}(1+x)^{-1/2}dx.
\end{multline*}
If $2r_1+a_1< 2r_2+a_2$, then
\begin{multline*}
L(r_1,a_1,s_1,r_2,a_2,s_2;u)\\
=-\frac{W^{(a_1,-1/2)}(s_1)}{\pi}\int_{-1}^u
\mathsf{J}_{s_1}^{(a_1,-1/2)}(x)\mathsf{J}_{s_2}^{(a_2,-1/2)}(x)(x-1)^{r_1-r_2}(1-x)^{a_1}(1+x)^{-1/2}dx.
\end{multline*}

\begin{theorem}
Let $n$ depend on $N$ in such a way that $n/N\rightarrow t$. Let $r_1,\ldots,r_l$ depend on $N$ in such a way that $r_i/N\rightarrow l$ and their differences $r_i-r_j$ are fixed finite constants. Here, $t,l>0$. Fix $s_1,s_2,\ldots,s_l$ to be finite constants. Let 
\[
\theta=1+\frac{2l}{(l-t)(2\alpha+\alpha^2)}
\]
Then 
\begin{multline*}
\lim_{N\rightarrow\infty}\det[K(r_i,a_i,s_i,r_j,a_j,s_j)]_{i,j=1}^l \\
=\begin{cases}
1,\ \ &l\geq (1-(1+\alpha)^{-2})t\\
\det[L(r_i,a_i,s_i,r_j,a_j,s_j;\theta)]_{i,j=1}^l,\ \ &l<(1-(1+\alpha)^{-2})t
\end{cases}
\end{multline*}
\end{theorem}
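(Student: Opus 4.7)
The plan is a steepest-descent analysis of the double contour integral in \eqref{Kernel}, combined with a residue calculation that matches the single-integral term $K_1$. Write $\phi_\alpha(z)^{-1} = 1 + \beta(1-z)$ with $\beta = \alpha + \alpha^2/2$, so $2\beta = 2\alpha + \alpha^2$. In the limit $n/N \to t$ and $r_i/N \to l$, the factors $\phi_\alpha(z)^{\pm n}(z-1)^{\pm r}$ in \eqref{Kernel} take the form $e^{\pm NS(z) + o(N)}$ with action $S(z) = t\log\phi_\alpha(z) + l\log(z-1)$; the fixed parameters $s_i,a_i$ and the finite integer differences $r_i - r_j$ contribute only bounded prefactors, and in particular the Jacobi polynomials $\mathsf{J}_{s_i}^{(a_i,-1/2)}$ are uniformly bounded on $[-1,1]$. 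A direct computation gives $S'(z) = t\beta/[1+\beta(1-z)] + l/(z-1)$, whose unique real zero is precisely $z_c = \theta$; moreover $S''(\theta) = \beta^2(t-l)^2(1/t - 1/l) < 0$ whenever $l < t$. One checks that $\theta \in (-1,1)$ is equivalent to $l < 2\beta t/(1+2\beta) = (1-(1+\alpha)^{-2})t$, which is exactly the non-trivial case of the theorem.

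In the non-trivial regime $\theta \in (-1,1)$, the $x$-contour $[-1,1]$ passes through $\theta$ along the steepest-descent direction of $\mathrm{Re}\,S$, since $S$ has a local maximum at $\theta$ along the real axis. The $u$-contour $C$ — a small positively oriented loop around the pole $u = 1$ of $(u-1)^{-r_2}$ — is then deformed into a contour $C_\theta$ that passes through $\theta$ transversely to the real axis, along the steepest descent direction of $-\mathrm{Re}\,S$. Because $1/(x-u)$ has a simple pole at $u = x$ for each $x \in [-1,1]$, the deformation picks up a residue at $u = x$ for $x$ in the sub-interval of $[-1,1]$ that the contour sweeps across. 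A careful orientation and sign analysis shows that this residue contribution, when combined with the indicator-dependent $K_1$, reproduces $L(r_1,a_1,s_1,r_2,a_2,s_2;\theta)$ in both cases of the indicator: the two cases of $L$ differ exactly by $\frac{W^{(a_1,-1/2)}(s_1)}{\pi}\int_{-1}^1(\cdots)\,dx$, which is precisely what $K_1$ contributes when the indicator equals $1$.

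The remaining double integral over the deformed contours vanishes as $N\to\infty$: outside any fixed neighbourhood of $\theta$, standard steepest-descent estimates give an exponential bound $e^{-cN}$, and near $\theta$ a Gaussian rescaling of width $N^{-1/2}$ in both variables (together with the transversal crossing of the two contours, which keeps the $1/(x-u)$ singularity integrable in each variable separately) gives a local contribution of order $N^{-1/2}$. Boundedness of the Jacobi polynomial prefactors is then enough to conclude convergence of the determinants. For the trivial regime $l \geq (1-(1+\alpha)^{-2})t$ the saddle $\theta$ lies outside $(-1,1)$, so the deformation picks up no residues and a separate argument is needed: by Theorem \ref{MainTheorem}, $\det[K(r_i,a_i,s_i;r_j,a_j,s_j)]$ is the probability that $\tilde X$ has particles at the prescribed positions and levels, and in this regime these positions lie deep inside the densely packed region of the particle system (where every site is occupied with probability tending to $1$), so the determinant converges to $1$.

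The main obstacle will be the contour deformation in the second step and the sign/residue bookkeeping that recovers both cases of $L$ from a single argument — in particular, arranging the branch of $\log(z-1)$ and its cut so that it is compatible with $C_\theta$, and making sure the residues swept through are the ones on the correct side of $\theta$ in each indicator case. A secondary subtlety is that the $x$-contour $[-1,1]$ may have to be perturbed near $\pm 1$ to handle the endpoint singularities $(1-x)^{a_1}(1+x)^{-1/2}$ without changing the value of the integral.
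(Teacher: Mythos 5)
Your treatment of the non-trivial regime $l<(1-(1+\alpha)^{-2})t$ is essentially the paper's argument: write the integrand as $e^{N(A(x)-A(u))}$ with $A(z)=t\log\phi_{\alpha}(z)+l\log(z-1)$, deform the contours through the saddle $\theta$ so that the double integral dies, and let the residues picked up at $u=x$ combine with the indicator term to produce $L(\cdot;\theta)$; your identification of the critical point $\theta$, the sign of $S''(\theta)$ for $l<t$, and the bookkeeping $\int_{\theta}^1=\int_{-1}^1-\int_{-1}^{\theta}$ are all correct. The genuine gap is in the regime $l\geq(1-(1+\alpha)^{-2})t$. Your fallback there --- that by Theorem \ref{MainTheorem} the determinant is the probability that $\tilde{X}$ occupies the prescribed sites, and that these sites lie deep inside the densely packed region so this probability tends to $1$ --- is circular and, when the levels $(r_i,a_i)$ differ, not even available. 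It is circular because the assertion that finitely many sites near the wall at level $\approx 2lN$ are occupied with probability tending to one is exactly the frozen-facet statement that the kernel asymptotics are designed to establish; neither the paper nor your proposal gives an independent proof of it, and a crude information-propagation bound would only cover $l$ well above $t$, not the whole range down to the critical line. It is unavailable for distinct levels because Theorem \ref{MainTheorem} identifies the particle system with the determinantal process only one level at a time; reading $\det[K(r_i,a_i,s_i;r_j,a_j,s_j)]$ with distinct $(r_i,a_i)$ as a multi-level occupation probability for $\tilde{X}$ is precisely the unproven Conjecture \ref{Conjecture}.

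What is needed in that regime is more contour analysis, not probability. For $(1-(1+\alpha)^{-2})t\leq l<t$ the deformation you already set up suffices: $\theta\leq -1$, so no poles at $u=x$ are crossed, the double integral still vanishes, and the surviving matrix of indicator terms $1_{2r_i+a_i\geq 2r_j+a_j}\tfrac{W^{(a_i,-1/2)}(s_i)}{\pi}\int_{-1}^1\cdots dx$ is (block) triangular with diagonal entries $1$ by orthonormality of the Jacobi polynomials, so the determinant is $1$ with no separate argument. For $l\geq t$, however, your steepest-descent setup breaks down: $\theta$ jumps discontinuously through infinity at $l=t$ (and is undefined at $l=t$ itself), lies in $(1,\infty)$ for $l>t$, and your concavity computation was only for $l<t$. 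The paper handles this sub-case by substituting $x=(z+z^{-1})/2$, $u=(v+v^{-1})/2$, placing the $x$-integration on the unit circle and the $u$-contour outside it, and deforming around the double critical point at $-1$, again picking up no residues. Your proposal contains nothing covering $l\geq t$, so as written the limit in that part of the ``trivial'' regime is not proven.
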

\begin{proof}
First consider the case when $l<t$. Let $A(z)=-t\log(1+\alpha(1-z)+\alpha^2/2\cdot(1-z))+l*\log(z-1)$. Then the kernel asymptotically equals
\begin{multline*}
1_{2r_1+a_1\geq 2r_2+a_2}\frac{W^{(a_1,-1/2)}(s_1)}{\pi}\int_{-1}^1\mathsf{J}_{s_1}^{(a_1,-1/2)}(x)\mathsf{J}_{s_2}^{(a_2,-1/2)}(x)(x-1)^{r_1-r_2}(1-x)^{a_1}(1+x)^{-1/2}dx\\
+\frac{W^{(a_1,-1/2)}(s_1)}{\pi}\frac{1}{2\pi i}\int_{-1}^1\oint_C  \frac{e^{N(A(x)-A(\theta))}}{e^{N(A(u)-A(\theta))}}\mathsf{J}_{s_1}^{(a_1,-1/2)}(x)\mathsf{J}_{s_2}^{(a_2,-1/2)}(u)\\
\times (x-1)^{r_1-r_2}\frac{(1-x)^{a_1}(1+x)^{-1/2}dudx}{x-u}.
\end{multline*}
Deform the contours as in Figure \ref{Contours}. With these deformations, the double integral converges to zero, but residues are picked up at $u=x$. For $l>(1-(1+\alpha)^{-2})t$, the parameter $\theta$ is less than $-1$, so no residues are picked up. We arrive at a triangular matrix with diagonal entries equal to $1$, so the determinant is $1$. For $l<(1-(1+\alpha)^{-2})t$, the parameter $\theta$ is in $(-1,1)$, and the residues give the discrete Jacobi kernel. 

\begin{center}
\begin{figure}[htp]
\caption{Shaded regions indicate $\Re(A(z)-A(\theta))>0$ and white regions indicate $\Re(A(z)-A(\theta))<0$. The double zero occurs at $\theta$.}
\begin{center}
\includegraphics[height=2in]{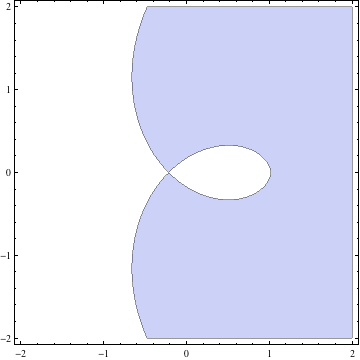}\ \ \ \ \ \  \includegraphics[height=2in]{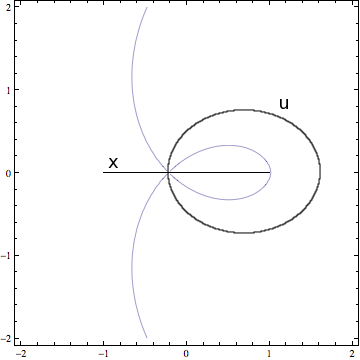}
\end{center}
\label{Contours}
\end{figure}
\end{center}

For $l>t$, the situation is quite different, due to the discontinuity in $\theta$ at $l=t$. Make the substitutions $x=(z+z^{-1})/2$ and $u=(v+v^{-1})/2$. Now the $x$-contour is the unit circle and the $v$ contour is a simple loop that goes outside the unit circle. After deforming as shown in Figure \ref{Contours2}, the double integral converges to $0$, with no residues picked up. Again, we obtain a triangular matrix with diagonal entries equal to $1$.

\begin{center}
\begin{figure}[htp]
\caption{Shaded regions indicate $\Re(A\left(\frac{z+z^{-1}}{2}\right)-A(-1))>0$ and white regions indicate $\Re(A(z)-A(-1))<0$. The double zero occurs at $-1$.}
\begin{center}
\includegraphics[height=2in]{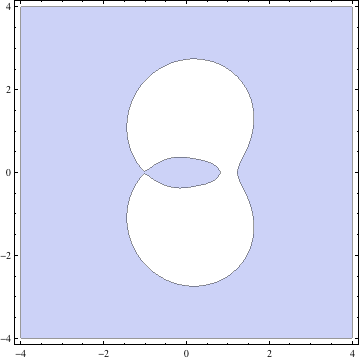}\ \ \ \ \ \  \includegraphics[height=2in]{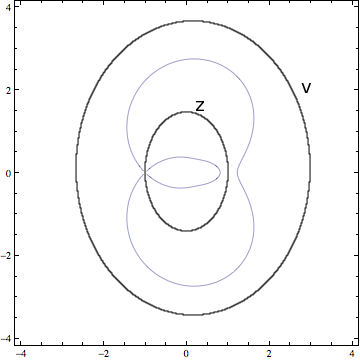}
\end{center}
\label{Contours2}
\end{figure}
\end{center}

\end{proof}

\subsection{Symmetric Pearcey Kernel}\label{SPK}
Define the \textit{symmetric Pearcey kernel} $\mathcal{K}$ on $\R_+\times\R$ as follows. In the expressions below, the $u$-contour is integrated on rays from $\infty e^{i\pi/4}$ to $0$ to $\infty e^{-i\pi/4}$. Let
\begin{multline*}\label{GaussianLikeKernel}
\mathcal{K}(\sigma_1,\eta_1,\sigma_2,\eta_2)= \\
\frac{2}{\pi^2 i} \int\int_0^{\infty}\exp(-\eta_1 x^2 + \eta_2 u^2 + u^4 - x^4)\cos(\sigma_1 x) \cos(\sigma_2 u) \frac{u}{u^2-x^2}dxdu\\
-\frac{1_{\eta_2<\eta_1}}{2\sqrt{\pi(\eta_1-\eta_2)}}\left(\exp\frac{(\sigma_1+\sigma_2)^2}{4(\eta_2-\eta_1)}+\exp\frac{(\sigma_1-\sigma_2)^2}{4(\eta_2-\eta_1)}\right).
\end{multline*}

\begin{theorem}
Let $c_{\alpha}$ be the constant $(1+\alpha)(\alpha(2+\alpha))^{-1/4}$. Let $s_1$ and $s_2$ depend on $N$ in such a way that $s_i/N^{1/4}\rightarrow 2^{-5/4}\sigma_ic_{\alpha}^{-1}>0$ as $N\rightarrow\infty$. Let $n$ and $r_1,r_2$ also depend on $N$ in such a way that $n/N\rightarrow 1$ and $(r_j- (1-(1+\alpha)^{-2})N)/\sqrt{N}\rightarrow 2^{-1/2}\eta_j$. Then
\[
(-2)^{r_2-r_1}(-1)^{s_1-s_2}\frac{N^{1/4}}{c_{\alpha}2^{5/4}}K(r_1,a_1,s_1,r_2,a_2,s_2)\rightarrow \mathcal{K}(\sigma_1,\eta_1,\sigma_2,\eta_2).
\]
\end{theorem}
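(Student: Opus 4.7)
The plan is steepest descent on the double contour integral in \eqref{Kernel}, after the Joukowski substitutions $x = (z+z^{-1})/2$ and $u = (v+v^{-1})/2$, which turn the Jacobi polynomials into the elementary expressions displayed in Section \ref{MC}. The theorem's parameters $t = n/N \to 1$ and $r_j/N \to 1-(1+\al)^{-2}$ place us on the boundary $l = (1-(1+\al)^{-2})t$ of the discrete Jacobi region analyzed earlier, i.e.\ at the collision $\theta \to -1$ of the saddle with the endpoint $x = -1$ of integration. The double integral term of $K$ will yield the double integral in $\mathcal{K}$, and the first (non-contour) term of $K$ will produce the Gaussian part.

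\textbf{Quartic critical point.} Writing $A(z) = t\log\phi_\al(z) + l\log(z-1)$, one checks directly that $A'(-1) = 0$ (this is precisely the relation $l = (1-(1+\al)^{-2})t$) and $A''(-1) = -t\al(2+\al)/(4(1+\al)^4) \neq 0$. The Joukowski map has a critical point at $z=-1$, so that $x+1 = -(z+1)^2/2 + O((z+1)^3)$. Combining these, the lifted exponent satisfies
\[
A(x(z)) - A(-1) = \frac{A''(-1)}{8}(z+1)^4 + O((z+1)^5) = -\frac{\al(2+\al)}{32(1+\al)^4}(z+1)^4 + O((z+1)^5),
\]
a quartic critical point---the classical signature of a Pearcey limit. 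The rescaling $z+1 = 2^{5/4}c_\al\, \zeta / N^{1/4}$ (and analogously $v+1 = 2^{5/4}c_\al\, \xi / N^{1/4}$) is then forced: it turns $N[A(x(z))-A(-1)]$ into $-\zeta^4$, and the precise formula $c_\al = (1+\al)(\al(2+\al))^{-1/4}$ is exactly what this calculation produces.

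\textbf{Collecting the contributions.} The $\sqrt{N}$-fluctuations $r_j - lN = \eta_j \sqrt{N/2}$ enter the exponent through $r_j\log(x-1)$; expanding $\log((x-1)/(-2)) = -(z+1)^2/4 + O((z+1)^4)$ and pairing $\eta_j\sqrt{N/2}$ with $(z+1)^2 = O(N^{-1/2})$ yields the $-\eta_1 \zeta^2$ and $+\eta_2 \xi^2$ in the Pearcey exponent; the $O(N)$ piece of $r_j$, combined with $n \log\phi_\al$, reconstructs the quartic computed above. For the Jacobi polynomials, $z^{\pm s} = (-1)^{\pm s}e^{\mp s(z+1)}(1+o(1))$ whenever $s(z+1)=O(1)$, so with $s_j = \sigma_j N^{1/4}/(2^{5/4}c_\al)$ one obtains $\mathsf{J}_{s_j}^{(a_j,-1/2)}(x) \to (-1)^{s_j}\cos(\sigma_j \cdot (\text{rescaled variable}))$ (directly from the second identity when $a=-1/2$, and after absorbing the $z^{1/2}-z^{-1/2}$ denominator of the first identity into the global prefactor when $a=1/2$). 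The original $z$-contour (upper semicircle, image of $x\in(-1,1)$) becomes a vertical line near $\zeta = 0$, matching the $x\in(0,\infty)$ direction in $\mathcal{K}$ after cosine symmetrization; the original $v$-loop is pushed, using the level set picture already drawn in Figure \ref{Contours2}, to the cross $\arg(v+1) \in \{\pm\pi/4,\pm 3\pi/4\}$, which under scaling is precisely the Pearcey cross $\infty e^{\pm i\pi/4} \to 0$. The Jacobians $dx\,du$, the kernel $1/(x-u)$, the weight $(1-x)^{a_1}(1+x)^{-1/2}$, the overall rescaling $N^{1/4}/(c_\al 2^{5/4})$ in the statement, and the $W^{(a_1,-1/2)}(s_1)/\pi$ in $K$ then combine (after a routine but tedious algebraic check) to give the prefactor $2/(\pi^2 i)$ and the rational factor $u/(u^2-x^2)$ of $\mathcal K$; the sign factors $(-2)^{r_2-r_1}(-1)^{s_1-s_2}$ in the statement are designed precisely to absorb the $(-1)^{s_j}$ from the Jacobi limits and the $(-2)^{r_j}$ coming from evaluating $(x-1)^{r_j}$ at $z\to -1$. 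For the first term of $K$, $(x-1)^{r_1-r_2}$ is bounded since $r_1-r_2$ is fixed, and the singular factor $(1+x)^{-1/2} \to \sqrt{2z}/(z+1)$ produces a $1/\zeta$ that, integrated against $\cos(\sigma_1\cdot)\cos(\sigma_2\cdot)\exp(-(\eta_1-\eta_2)\zeta^2)$ on the real line, yields the Gaussian summand of $\mathcal{K}$; the indicator $1_{2r_1+a_1\geq 2r_2+a_2}$ passes to $1_{\eta_1 \geq \eta_2}$.

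\textbf{Main obstacles.} The hardest step will be the global contour deformation: one must push the $v$-loop from a small contour around $(-1,1)$ to the Pearcey cross through $v=-1$ without crossing unwanted singularities, and verify that contributions from the portion of the contour with $z, v$ bounded away from $-1$ decay uniformly in $N$. This follows the same steepest-descent strategy as in the discrete Jacobi case (Figure \ref{Contours2}), but now at the critical boundary where the saddle coincides with the branch endpoint, so a careful uniform tail estimate is required. A secondary, purely bookkeeping, obstacle is tracking the branches of $(1+x)^{-1/2}$, the $a_j$-dependent powers of $i$, and the various integer sign factors precisely enough to land on the stated form of $\mathcal{K}$.
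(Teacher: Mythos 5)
Your route is essentially the paper's route: the paper's proof is a one-line reference to the steepest-descent argument of Theorem 5.8 of \cite{kn:BK}, recording only the new function $A(z)=\log\phi_{\alpha}(z)+(1-(1+\alpha)^{-2})\log(z-1)$ and its expansion $A(z)-A(-1)=-\tfrac{\alpha(2+\alpha)}{8(1+\alpha)^4}(z+1)^2+O((z+1)^3)$; your quartic critical point in the Joukowski variable, the $N^{1/4}$ scaling with constant $2^{5/4}c_\alpha$, and the Pearcey-cross deformation are exactly the content of that argument, so in substance you are reproducing the omitted proof rather than replacing it.

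Two points in your sketch need repair, though neither changes the method. First, in this regime $r_1-r_2$ is \emph{not} a fixed constant (that was the hypothesis of the discrete Jacobi theorem); here $r_1-r_2\sim(\eta_1-\eta_2)\sqrt{N/2}$, and it is precisely the factor $(x-1)^{r_1-r_2}\approx(-2)^{r_1-r_2}\exp\bigl((r_1-r_2)(z+1)^2/4\bigr)$ in the single-integral term that generates the Gaussian damping you invoke; moreover the $(1+x)^{-1/2}$ singularity does not survive as a $1/\zeta$ --- it is cancelled by the Jacobian $dx\approx-(z+1)\,dz$, leaving a flat measure against which the cosine--Gaussian integral produces the last term of $\mathcal{K}$. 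Second, the quadratic coefficient does not come out to $\eta_j$ automatically: with $z+1=2^{5/4}c_\alpha\,\zeta N^{-1/4}$ and $r_j-(1-(1+\alpha)^{-2})N\sim 2^{-1/2}\eta_j\sqrt{N}$, the term $(r_j-lN)\log\bigl((x-1)/(-2)\bigr)$ evaluates to $\eta_j c_\alpha^2\zeta^2$, not $\eta_j\zeta^2$, while the quartic and the $\sigma_j$ terms match exactly. So the ``routine but tedious algebraic check'' you defer is where the actual work lies: either the constant in the $r_j$-scaling must be traced through (and reconciled with the statement as printed) or the discrepancy absorbed by a rescaling of the kernel, and a uniform tail estimate for the deformed contours away from $z=-1$ must be supplied, exactly as in the cited proof.
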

\begin{proof}
Since the proof is almost identical to the proof of Theorem 5.8 from \cite{kn:BK}, the details will be omitted. The only difference is that now 
\[
A(z)=\log\phi_{\alpha}(z)+(1-(1+\alpha)^{-2})\log(z-1),
\]
with asymptotic expansion
\[
A(z)-A(-1)=-\frac{\alpha(2+\alpha)}{8(1+\alpha)^4}(z+1)^2+O((z+1)^3).
\]
\end{proof}

\bibliographystyle{plain}

\begin{center}
\begin{figure}
\caption{The top figure shows left jumps and the bottom figure shows right jumps. A yellow arrow means that the particle has been pushed by a particle below it. A green arrow means that the particle has jumped by itself. A red line means that the particle has been blocked by a particle below.
In the table, keep in mind that $\xi^k_{(k+1)/2}(n+1/2)$ actually correspond to left jumps, but occur at the same time as the right jumps.
}
\label{Jumping}
\includegraphics[height=2in]{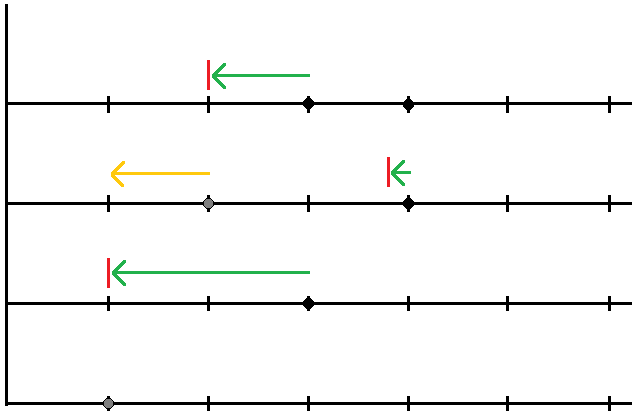}

    \renewcommand\arraystretch{1.33}
    \begin{tabular}{ | l | l | l | l | l | }
    \hline
    $\boldsymbol{\tilde{X}}(n)$ & Left Jumps & $\boldsymbol{\tilde{X}}(n+\half)$ & Right jumps  & $\boldsymbol{\tilde{X}}(n+1)$ \\ \hline
    $\tilde{X}^1_1(n)=1$ &                                     & $\tilde{X}^1_1(n+\half)=1$ & $\xi^1_1(n+\half)=1$ & $\tilde{X}^1_1(n+1)=3$\\  
			    &					    &					 & $\xi^1_1(n+1)=3$ 	 & 	 			\\ \hline
    $\tilde{X}^2_1(n)=3$ & $\xi^2_1(n+\half)=3$ & $\tilde{X}^2_1(n+\half)=1$ & $\xi^2_1(n+1)=1$ 	 & $\tilde{X}^2_1(n+1)=4$ \\ \hline
    $\tilde{X}^3_2(n)=2$ &                                     & $\tilde{X}^3_2(n+\half)=1$ & $\xi^3_2(n+\half)=1$ & $\tilde{X}^3_2(n+1)=0$\\      
			    &					    &					 & $\xi^3_2(n+1)=0$ 	 & 				  \\ \hline
    $\tilde{X}^3_1(n)=4$ & $\xi^3_1(n+\half)=1$ & $\tilde{X}^3_1(n+\half)=4$ & $\xi^3_1(n+1)=0$ 	 & $\tilde{X}^3_1(n+1)=5$  \\    \hline
    $\tilde{X}^4_2(n)=3$ & $\xi^4_2(n+\half)=2$ & $\tilde{X}^4_2(n+\half)=2$ & $\xi^4_2(n+1)=2$ 	 & $\tilde{X}^4_2(n+1)=3$  \\    \hline
    $\tilde{X}^4_1(n)=4$ & $\xi^4_1(n+\half)=0$ & $\tilde{X}^1_1(n+\half)=4$ & $\xi^4_1(n+1)=1$ 	 & $\tilde{X}^4_1(n+1)=6$  \\    \hline
    \end{tabular}
    
\includegraphics[height=2in]{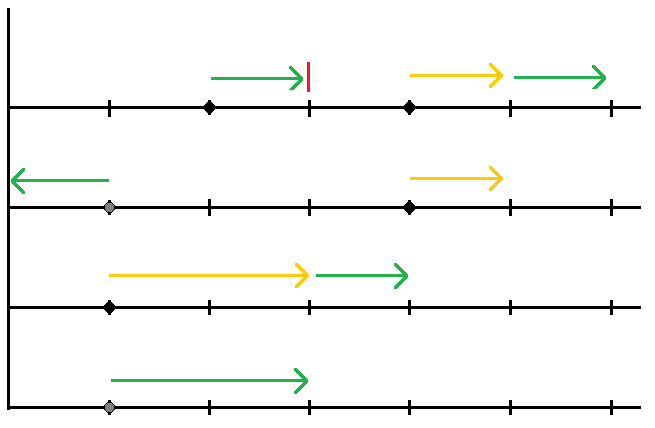}
\end{figure}
\end{center}

\end{document}